\documentclass[aps,superscriptaddress%
 preprint,showkeys,
 amsmath,amssymb,
 aps, physrev, float
]{revtex4-2}
\usepackage{float}
\usepackage{booktabs}
\usepackage[left=2.5cm,right=2.5cm,top=2cm,bottom=2cm]{geometry}
\usepackage[normalem]{ulem}
\usepackage{amssymb,amsthm,amsmath,amsfonts}
\usepackage{graphicx}
\usepackage{dcolumn}
\usepackage{bm}
\newtheorem{theorem}{Theorem}
\newtheorem{algorithm}{Algorithm}
 \newtheorem{lemma}{Lemma}

 \newtheorem{definition}{Definition} 
\begin{document}

\preprint{APS/123-QED}

\title{\textbf{Exploring topological features of a complex network of prime factors and composite numbers} 
}%

\author{Carlos Daniel Tamayo-Castro}
 \email{cdtamayoc@ciencias.unam.mx}
 \affiliation{
Facultad de Ciencias. Universidad Nacional Aut\'onoma de M\'exico. Ciudad de M\'exico, México. \\ESIME-Zacatenco-Instituto Polit\'ecnico Nacional. 07338, Ciudad M\'exico, M\'exico.
}%

\author{Aldo Ramirez-Arellano}%
 \email{aramirezar@ipn.mx}
\affiliation{UPIICSA-Instituto Polit\'ecnico Nacional. 07338, Ciudad M\'exico, M\'exico.
}

\author{Juan Bory-Reyes}
  \email{juanboryreyes@yahoo.com}
\affiliation{
ESIME-Zacatenco-Instituto Polit\'ecnico Nacional. 07338, Ciudad M\'exico, M\'exico.
}%

\date{\today}

\begin{abstract}

Prime numbers are mathematical concepts of particular interest because of their relative rarity among integers, as well as the relative unpredictability of their occurrence and density along the integer sequence. They constitute the building blocks of all other composite numbers, giving rise to the interesting and important concept of prime factorization, a process guaranteed by the fundamental theorem of arithmetic. Among the several works that have been reported about prime numbers and their interrelationship with composite numbers, some have considered derived graphs and/or networks.
\noindent 
 
In this paper, we derive some topological features of a bipartite complex network of prime factors and composite numbers. More specifically, we prove mathematically that the network's diameter is exactly five regardless of the number of nodes. In particular, we show that certain nontrivial sub-networks have minimal diameter equal to four, which is, again, independent of the number of nodes as long as the network is large enough. The last shows that this network is of small-world type. We also characterize exactly the number of boundary nodes in the network and present a couple of algorithms to identify the most important, relevant, influential, or central nodes of the network. Finally, we focus on the computation of the information dimensions on the basis of Shannon and Tsallis entropies of the studied complex network. The results show both decreasing as the number of nodes increases. Furthermore, both entropies indicate that box coverings at the diameters \(\varepsilon=[3,4]\), group nodes into roughly the same number of boxes, independent of the number of nodes in the network.

\end{abstract}

\keywords{Number theory, Complex networks, Entropy, Divergence measures}
\maketitle


\section{Introduction}
Prime numbers have fascinated humans for a long period of time. Deriving from the Latin term $\textit{primus}$, meaning first, prime numbers occupy a special place along the sequence of integer values, especially because they constitute primary factors from which other (composite) subsequent numbers can be obtained by factor multiplication \cite{CranPom,Ingham,IreRo1990,Na2006,Stillwell,Wells}. More specifically, prime numbers are integers divisible only by 1 and themselves. For the sake of clarity, only positive integer numbers are considered henceforth in this work, and the number 1 will be excluded as a prime number.

Prime and composite numbers differ based on the number of factors they have. A prime number has only two factors, and a composite number has more than two factors. A factor is a value that can divide a number or an expression evenly. 

The decomposition of an integer into its prime factors, which is unique, constitutes an especially interesting operation in number theory, called prime factor decomposition. By providing a basic “strategy” for building integer numbers from prime components, prime factor decomposition also paves the way to establishing relationships between integer numbers that take into account common prime factors and divisibility.

The mystery of primes is not a mere conundrum of pure mathematics. Unexpected connections can be discovered between primes and different topics in applied research areas such as 
physics, biology, chemistry, communication, acoustics, electronics, cryptography, computing, feedback and control in engineering \cite{Ya}.  

Information theory has also been applied to the study of prime numbers, as Kontoyiannis \cite{Kont2008} demonstrated that basic information-theoretic arguments and elementary probability combined can prove Chebyshev's classical asymptotic result for the sum of logarithmic prime ratios, and Mayer et al. \cite{MaMinSaMo2024} established fundamental information-theoretic inequalities for positive integers and ideals in rings of algebraic integers. These works are founded on the definition of entropy, establish upper and lower bounds, and explore how this measure reflects the structural complexity, or "disorder," of integers \cite{MinPo2011}. Expanding this framework, the authors of \cite{MinSa2023} introduce a novel way to calculate the entropy of a non-zero ideal within the ring of integers of an algebraic number field, utilizing its unique factorization into prime ideals and their ramification indices.

One of the most promising approaches to solving the enigmas of number theory is the advances in complex network theory; see \cite{Azaria,ChanDas,GaABo2014,RaAm2020,SheBhaAm,ZBHC}. At the same time, the deep mathematical apparatus of number theory has provided a range of tools for the study of complex networks. These tools make it possible to construct networks of primes and composite numbers as extreme cases, which push the boundaries of the definitions, metrics, and algorithms of complex network theory.

The order relation implicit in the natural numbers allows us to consider a bipartite network representation of natural numbers as a growing system. In the growing process, natural numbers join the network sequentially and try to connect to already existing primes. Those new numbers that succeed in this process are said to be composite; otherwise, they become prime numbers. 

In this paper, we derive some topological features of a bipartite complex network of prime factors and composite numbers. More specifically, we prove mathematically that the network's diameter is exactly five. In particular, we show that certain nontrivial sub-networks have a minimum diameter of four, and we show that this network possesses the small-world property. We also characterize exactly the number of boundary nodes in the network and present a couple of algorithms to identify the most important, relevant, influential, or central nodes.

\section{Background}
In this section, we provide a brief exposition of foundational concepts according to our purpose; see \cite{CosRoTra2007,CranPom, E2012, Ingham,IreRo1990,Na2006,Stillwell,Wells} for more details.
\subsection{Complex Networks}
Most of the complexity of networks is encoded into the intricate topology of the interactions among their components and into the layout of the intensities associated with such interactions.

Each complex network (or class of networks) presents specific topological features which characterize its connectivity and highly influence the dynamics of processes executed on the network. 
\begin{definition}(Complex Network)\\
A complex network is a large, simple graph whose structure exhibits significant heterogeneity—for example, a highly non-uniform degree distribution—and non-trivial global properties (clustering, communities, short paths) that do not conform to the typical behavior of regular graphs or the classical random model.	
\end{definition}
Among the vast variety of complex networks, bipartite networks form an important class. The networks studied here are a particular case of these.
\begin{definition}(Bipartite Network)
In this type of network, the set of nodes $\mathcal{N}$ can be written as the disjoint union of two sets $\mathcal{N}_{1}$ and $\mathcal{N}_{2}$ such that there are no edges in the network of the form $\{i,j\}\in \mathcal{E}$ where $i\in \mathcal {N}_{1}$ and $j\in \mathcal{N}_{2}$.		
\end{definition}

\begin{definition}(Network diameter)
Let $G = (V, E)$ be a connected complex network. The diameter of $G$, denoted $\mathrm{diam}(G)$, is defined as
\[
\mathrm{diam}(G) = \max_{u,v \in V} d(u,v),
\]
where $d(u,v)$ is the length of the shortest path between nodes $u$ and $v$.
\end{definition}

\begin{definition}(Degree of a Node)
In a complex network, the degree of a node $v \in V$, denoted $\deg(v)$, is defined as the number of edges incident to $v$. In undirected graphs, this
corresponds to the number of neighbors of $v$.
\end{definition}

\begin{definition}(Relevant Node)
In a complex network, relevant nodes are those vertices whose structural position has a significant impact on the network's global properties. 
\end{definition}

Identifying relevant nodes in complex networks is of great significance to the research of topological properties of the networks and has attracted extensive research attention  attention
and interest. In particular, the following algorithm has been efficiently used in \cite{barabasi2000,han2002,Ramirez2021}.
\begin{algorithm}\label{TeoAlgRelNodeClass}(The classical (disconnection) algorithm for identifying relevant nodes in a complex network)
\begin{enumerate}
	\item[(i)] The nodes are sorted from highest to lowest based on their degree.
	\item[(ii)] The first vertex is removed from the list, added to the list of relevant nodes, and a new network is formed with the remaining vertices.
	\item[(iii)] If the new network contains any connected sub-networks, steps (i) and (ii) are repeated; otherwise, the process ends. 
\end{enumerate}
\end{algorithm}

\begin{definition}(Boundary Nodes)
For the purposes of this paper, we will consider nodes with degree one to be boundary nodes.
\end{definition}

\subsection{Prime and Composite Numbers}
Prime numbers are the building blocks of the integers and a central object of study in number theory.
\begin{definition}(Prime Number)\\
A prime number $p$ is a natural number different from 1 that is divisible only by 1 and $p$. 	
\end{definition}

\begin{theorem}(Fundamental Theorem of Arithmetic)\\
Every integer $n > 1$ can be written in a unique way as
\[
n = p_1^{\alpha_1} p_2^{\alpha_2} \cdots p_k^{\alpha_k},
\]
where $p_1 < p_2 < \cdots < p_k$ are distinct prime numbers and $\alpha_1, \alpha_2, \dots, \alpha_k$ are positive integers. This factorization is unique up to the order of the factors.	
\end{theorem}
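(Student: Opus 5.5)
We prove existence and uniqueness separately, both by strong induction on $n$.

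\emph{Existence.} The base case $n=2$ holds because $2$ is prime. Now let $n>2$ and assume every integer $m$ with $1<m<n$ has such a factorization. If $n$ is prime, then $n=n^1$ and we are done. Otherwise, by the Definition of prime number, $n$ has a divisor $a$ with $1<a<n$. Write $n=ab$, so that $1<b<n$ as well. By the induction hypothesis both $a$ and $b$ are products of primes. Concatenating the two products, collecting equal primes into powers, and ordering the distinct primes increasingly gives a representation $n=p_1^{\alpha_1}\cdots p_k^{\alpha_k}$ with $p_1<\cdots<p_k$.

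\emph{Uniqueness.} The key ingredient is Euclid's lemma: if a prime $p$ divides $ab$, then $p\mid a$ or $p\mid b$. The paper assumes nothing beyond the definitions, so we derive this lemma ourselves.
\begin{enumerate}
\item[(1)] \emph{B\'ezout's identity.} For integers $a,b$ not both zero, the smallest positive element $d$ of the set $\{ax+by : x,y\in\mathbb{Z}\}$ divides both $a$ and $b$. To see this, divide $a$ by $d$ with remainder; the remainder lies in the set and is smaller than $d$, so it must be $0$. The same argument applies to $b$.
\item[(2)] \emph{Euclid's lemma.} Suppose $p\nmid a$. The only positive divisors of $p$ are $1$ and $p$, so $\gcd(p,a)=1$. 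By (1) there exist $x,y$ with $px+ay=1$. Multiplying by $b$ gives $pbx+aby=b$. Both terms on the left are divisible by $p$, hence $p\mid b$.
\end{enumerate}
By induction on the number of factors, Euclid's lemma extends to products: if $p$ divides $q_1\cdots q_r$, then $p$ divides some $q_i$.

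Uniqueness now follows by strong induction. Suppose $n=p_1\cdots p_s=q_1\cdots q_r$, where the primes are listed with multiplicity. Then $p_1$ divides $q_1\cdots q_r$, so $p_1\mid q_j$ for some $j$. Since $q_j$ is prime and $p_1\neq 1$, we get $p_1=q_j$. Cancelling this common factor yields two factorizations of $n/p_1<n$. If $n/p_1=1$, both remaining products are empty, so $s=r=1$. Otherwise, the induction hypothesis says the remaining multisets of primes coincide, and therefore the original multisets coincide. In particular, the distinct primes $p_i$ and their exponents $\alpha_i$ are determined by $n$. Requiring $p_1<\cdots<p_k$ removes the remaining freedom of ordering.

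The main obstacle is step (1). It is the only step that uses the division algorithm and the well-ordering of the positive integers in an essential way. Everything after it is routine bookkeeping with multisets.
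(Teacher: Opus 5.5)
Your proof is correct and complete. Existence comes by strong induction on $n$, and uniqueness goes B\'ezout's identity $\Rightarrow$ Euclid's lemma $\Rightarrow$ cancellation plus strong induction; this is the standard textbook argument.

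There is no proof in the paper to compare it with. The paper quotes the Fundamental Theorem of Arithmetic only as background from standard number-theory references. It uses the theorem, in particular the uniqueness of the exponents $\alpha_i$, only to make the edges and edge weights of $R(N)$ well defined. Your argument supplies a self-contained justification that the paper does not attempt.

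The one slightly compressed step is in (2). Part (1) tells you only that the least positive element $d$ of $\{px+ay : x,y\in\mathbb{Z}\}$ is a positive common divisor of $p$ and $a$. Since $d\mid p$, you have $d\in\{1,p\}$, and $p\nmid a$ rules out $d=p$. So $d=1$, which is exactly the identity $px+ay=1$ you then use. Stating this explicitly would make the passage from ``$\gcd(p,a)=1$'' to B\'ezout airtight.
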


\begin{theorem}(Eratosthenes' Sieve Algorithm)\\
Let $N$ be an integer greater than $1$. To find all prime numbers less than or equal to $N$, the method consists of the following steps:

\begin{enumerate}
	\item[(i)] Write a list of integers from $2$ to $N$.
	\item[(ii)] Select the first number in the list that has not yet been checked; this number is a prime number.
	\item[(iii)] Mark (or conceptually eliminate) all multiples of this prime number that are greater than it.
	\item[(iv)] Repeat the previous steps, using the next unmarked number on the list as the new prime.
	\item[(v)] Stop the process when the square of the selected number exceeds $N$; the numbers that remain unchecked are prime.
\end{enumerate}		
\end{theorem}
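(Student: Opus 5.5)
The statement to justify is that the sieve procedure is correct. After it halts, the unmarked numbers in $\{2,\dots,N\}$ are exactly the primes $\le N$. I will prove two inclusions, with an invariant about step (ii) as the main tool. The only external fact I need is the Fundamental Theorem of Arithmetic stated above, and really only its weak form: every integer $n>1$ has a prime divisor, and every composite $n$ has a prime divisor $p$ with $p^2\le n$.

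\textbf{Invariant for step (ii).} Whenever step (ii) selects the smallest unmarked number $m$ not yet used, $m$ is prime. I argue by strong induction on the order of selection. Suppose $m$ were composite. By the Fundamental Theorem of Arithmetic, $m$ has a prime divisor $q<m$. Since $q<m$ and $q$ is itself unmarked, $q$ has already been considered. By the induction hypothesis and minimality of $m$, $q$ was processed as a selected prime before $m$. Step (iii) for $q$ would then have marked $m$, a multiple of $q$ larger than $q$. This contradicts $m$ being unmarked.

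\textbf{Primes are never marked.} Step (iii) marks only multiples $kq$ of a selected prime $q$ with $kq>q$, so $k\ge 2$. Such a number has the proper divisor $q$ with $1<q<kq$, so it is composite. Hence no prime is ever marked, and every prime $\le N$ survives.

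\textbf{Composites are marked before the stopping rule (v) fires.} Let $n\le N$ be composite and let $p$ be its smallest prime factor. Writing $n=p\cdot(n/p)$ with $n/p\ge p$ gives $p^2\le n\le N$. Because primes are never marked, $p$ remains unmarked. By the invariant, the selected numbers are exactly the unmarked numbers in increasing order, and they are prime. So the process reaches $p$ before any selected number $s$ with $s^2>N$. Step (iii) for $p$ then marks $n$, since $n>p$. Therefore every composite $n\le N$ is marked when the process stops, and the unmarked numbers are precisely the primes.

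\textbf{Main obstacle.} The delicate point is the interaction between the stopping criterion in (v) and the ordering of selections. One must make sure that every prime $p$ with $p^2\le N$ is actually selected before termination. This needs the fact that selections occur in increasing order and skip no unmarked number below the current one. I would state this monotonicity explicitly as part of the inductive invariant, rather than leave it implicit in the informal wording of steps (ii) and (iv).
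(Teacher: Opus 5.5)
The paper gives no proof of this statement. It is quoted in the Background section as the classical description of the sieve, so there is no argument of the authors to compare yours with.

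Judged on its own, your proof is the standard correctness argument and it is sound. Three facts carry it: the invariant that every selected number is prime (which also justifies the claim made in step (ii)), the fact that only composites are ever marked, and the bound $p^2\le n\le N$ for the least prime factor $p$ of a composite $n$. Together they show that the survivors are exactly the primes $\le N$.

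A few points need tightening:
\begin{itemize}
\item Your invariant uses that the prime divisor $q$ of $m$ is unmarked, but you establish this only in the next paragraph, which itself speaks of a ``selected prime''. To avoid any appearance of circularity, first prove that every marked number is $kq$ with $k\ge 2$ and $q\ge 2$ a selected number, hence composite. This needs nothing about $q$ being prime.
\item In the invariant you also need that step (iii) for $q$ was actually carried out. This holds because the process went on to select $m$, so $q$ did not trigger the stopping rule. The induction hypothesis itself is not needed.
\item The inequality $n/p\ge p$ deserves one line: $n/p>1$, and each of its prime factors divides $n$, hence is $\ge p$.
\item The monotonicity you flag is straightforward once you read (ii) and (iv) as ``take the next unmarked number after the previous selection''. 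Marks are never removed, and step (iii) marks only numbers larger than the current selection. So at each selection every smaller number is already selected or marked, and no selected number is marked afterwards.
\end{itemize}
With these made explicit, the proof is complete.
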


In \cite{RosScho} we can find the following result, which plays an important role in the proofs of some of our results.
\begin{theorem}\label{TheoEstPrim}(Estimates of the amount of primes numbers)\\
Let $\pi(N)$ be the function that assigns to each value of $N$ the number of primes less than or equal to $N$. Then we have that
\begin{itemize}
	\item[(i)] \begin{eqnarray*}
		\dfrac{N}{\ln N} < \pi(N), & \text{for } 17 \leq N;
	\end{eqnarray*}
	
	\item[(ii)] \begin{eqnarray*}
		\pi(N) < 1.25506 \dfrac{N}{\ln N}, & \text{for } 1 < N.
	\end{eqnarray*}
\end{itemize}	
\end{theorem}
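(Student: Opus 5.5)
Since this statement is quoted from \cite{RosScho} rather than proved in the paper, the natural plan is to reconstruct the Rosser--Schoenfeld argument in outline. The idea is to pass from $\pi(N)$ to the Chebyshev function $\theta(x)=\sum_{p\le x}\ln p$, obtain explicit two-sided bounds on $\theta$, and transfer them back to $\pi$ by partial summation. Abel summation gives the identity
\[
\pi(x)=\frac{\theta(x)}{\ln x}+\int_2^x \frac{\theta(t)}{t\,\ln^2 t}\,dt ,
\]
so any bound of the form $x(1-\eta_1(x))<\theta(x)<x(1+\eta_2(x))$ feeds directly into bounds on $\pi(x)$. For example, substituting the lower bound shows $\pi(x)>x/\ln x$ once the integral term dominates the loss $x\eta_1(x)/\ln x$ in the first term.

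The key steps, in order, are as follows.

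\textbf{Step 1 (explicit bounds for $\theta$).} I would aim for estimates such as
\[
|\theta(x)-x|<\frac{x}{2\ln x}\quad\text{for large } x,
\qquad \theta(x)<1.01624\,x\quad\text{for all } x>0.
\]
These come from the explicit formula for $\psi(x)$ in terms of the zeros of $\zeta$, combined with an explicit zero-free region and numerical verification that the first several thousand zeros lie on the critical line. One then uses $\psi-\theta=O(\sqrt{x})$, made explicit.

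\textbf{Step 2 (large $N$).} Insert these bounds into the identity above. The integral term then behaves like $x/\ln^2 x$ up to a controlled error. This yields (i) for all $N$ beyond some moderate threshold $N_0$, and yields $\pi(N)\ln N/N\le 1.25506$ for $N\ge N_0$ with room to spare.

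\textbf{Step 3 (small $N$).} Settle the finite range $N<N_0$ by direct computation of $\pi$. For (i) it suffices to check $N$ at (or just before) each prime $p_k\le N_0$ with $p_k\ge 17$, because $x/\ln x$ is increasing for $x>e$ while $\pi$ is a step function. For (ii) the constant $1.25506$ is exactly the maximum of $\pi(x)\ln x/x$, attained at $x=113$. The check therefore reduces to evaluating this ratio at the primes up to $N_0$, since the ratio decreases between consecutive primes once $x>e$.

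The main obstacle is Step 1. Purely elementary Chebyshev-type arguments, based on the binomial coefficient $\binom{2n}{n}$ and giving $\theta(x)<x\ln 4$, only produce constants around $1.39$ in the upper bound. They also fail to give the sharp lower bound $N/\ln N$ with constant $1$. Getting both to hold uniformly from small $N$ onward requires the analytic input from the zeros of $\zeta$ and careful bookkeeping of every explicit constant, so that $N_0$ is small enough for the computation in Step 3 to be feasible.
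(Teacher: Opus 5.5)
Your outline is correct and is essentially the Rosser--Schoenfeld argument. The paper relies on that argument by citation alone and gives no proof of its own: explicit bounds on $\theta$ from the zeros of $\zeta$, transfer to $\pi$ through $\pi(x)=\theta(x)/\ln x+\int_2^x \theta(t)\,dt/(t\ln^2 t)$, and a finite check whose extremal case for (ii) is $x=113$.

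Two small precisions. First, $30\ln 113/113=1.255058\ldots$, so $1.25506$ is this maximum rounded up, not exactly equal to it; that is what makes the strict inequality hold at $x=113$. Second, a bound valid for all $x>0$, such as $\theta(x)<1.01624\,x$, also needs direct computation of $\theta$ over a moderate range, because the explicit-formula estimates only become effective beyond a sizeable threshold.
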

\section{Complex Network of Prime Factors and Composite Numbers}
Let us consider the bipartite network introduced in \cite{GaABo2014}, to be denoted by $R(N)$. Let all natural numbers from 2 to $N$ be considered, excluding prime numbers between $\dfrac{N}{2}$ and $N$. These numbers are the nodes of the network. From each prime number, we draw an edge to every composite number that has it in its prime factor decomposition. Each edge has a weight that is given by the multiplicity in the prime factor decomposition divided by the sum of all multiplicities in the decomposition.    
\subsection{Network Topological Structure}

\paragraph{Connectivity and Diameter}
\begin{theorem}\label{TheoConectNet}
	The network $R(N)$ is connected.
\end{theorem}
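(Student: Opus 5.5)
Our plan is to show that every node of $R(N)$ lies within a bounded distance of the prime node $2$; connectivity then follows because any two nodes are joined through $2$. Throughout we assume $N\ge 4$, so that $4$ is a node and the network is non-trivial. The node set consists of the composite numbers in $[2,N]$ and the primes $p\le N/2$. Edges join a prime $p$ to a composite $m$ exactly when $p\mid m$.

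First consider a composite node $m$. It has some prime divisor $p$, and since $m=p\cdot(m/p)$ with $m/p\ge 2$, we get $p\le m/2\le N/2$. So $p$ is a node of $R(N)$ and $\{p,m\}$ is an edge. In particular, no composite node is isolated.

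Next consider a prime node $p\le N/2$. The number $2p$ is composite and satisfies $2p\le N$, so it is a node. It is divisible by both $p$ and $2$, so it is adjacent to both. This gives the path $p - 2p - 2$ of length at most $2$ (for $p=2$ it is just the edge to $4$). Hence every prime node is connected to the node $2$.

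Combining the two steps, every composite node $m$ is adjacent to a prime node $p\le N/2$, which in turn is joined to $2$. So every node lies in the connected component of $2$, and $R(N)$ is connected. This argument also bounds distances: each node is within distance $3$ of the node $2$, which is a first hint toward the diameter bound proved later.

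The only delicate point is the role of the excluded primes in $(N/2,N]$. We must check that the construction never needs such a prime. This is exactly the inequality $p\le m/2\le N/2$ used above, together with $2p\le N$ for the prime nodes. This exclusion is also precisely why the network is connected at all: a prime $p>N/2$ divides no composite number up to $N$, so it would have been an isolated node. No estimate from Theorem \ref{TheoEstPrim} is needed here.
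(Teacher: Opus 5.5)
Your proof is correct, but it follows a different route from the paper's.

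The paper argues pairwise. For two nodes $q,m$ it distinguishes two cases:
\begin{itemize}
\item they share a prime factor, giving the path $q\rightarrow p_{qm}\rightarrow m$;
\item they are coprime, giving the path $q\rightarrow p_q\rightarrow p_qp_m\rightarrow p_m\rightarrow m$ through their smallest prime factors.
\end{itemize}
To guarantee $p_qp_m\le N$, it first works in the sub-network $R'(N)$ of composites and primes $\le\sqrt{N}$. It then attaches the primes in $(\sqrt{N},N/2]$ afterwards via $2p^{*}$.

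You instead use the node $2$ as a hub. Each composite $m$ is adjacent to a prime divisor $p\le m/2\le N/2$, and each prime node reaches $2$ through $2p$. This is shorter and needs neither the $\sqrt{N}$ threshold nor the coprimality case split. It also explicitly checks that every intermediate prime is actually a node of $R(N)$, a point the paper leaves implicit. In fact, the paper's Case 1 can route through a common prime larger than $\sqrt{N}$, i.e.\ outside $R'(N)$; for example $14\rightarrow 7\rightarrow 21$ when $N=30$.

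What the paper's pairwise argument buys is a quantitative bound: distances inside $R'(N)$ are at most $4$, and the paper reuses this for its diameter results. Your hub argument only places every node within distance $3$ of $2$, so it gives the weaker bound $\mathrm{diam}(R(N))\le 6$.
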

\begin{proof}
We should prove that any two nodes are connected through a path of edges. First, we will study the nodes corresponding to composite numbers and prime numbers less than $\sqrt{N}$. Later, we will see the connectivity of nodes corresponding to prime numbers between $\sqrt{N}$ and $\dfrac{N}{2}$. In our network, each node corresponds to an integer between 2 and $N$; then we can have two cases: The first case, the nodes are relatively prime numbers, or the second one, they are not. 
	
We will use the letters $q$ and $m$ to denote numbers corresponding to two different nodes.
	
	\textbf{Case 1} If $q$ and $m$ are not relative primes then there exists a prime number $p_{qm}$ which is in both prime factor decompositions of $q$ and $m$, hence
	\begin{equation*}
		q \rightarrow p_{qm}  \rightarrow m
	\end{equation*}
	so in this case $p$ and $q$ are connected through a path of two steps.
	
	\textbf{Case 2} If $q$ and $m$ are relative primes then we will denote $p_{q}$ and $p_{m}$ the lower factor in the prime factor decomposition of $q$ and $m$ then
	\begin{equation*}
		q \rightarrow p_{q} \rightarrow p_{q}p_{m} \rightarrow p_{m} \rightarrow m
	\end{equation*}
	Here we should note that, because $p_{q}$ and $p_{m}$ are the lower factors in the prime factor decomposition of $q$ and $m$ and they cannot be prime numbers between $\sqrt{N}$ and $\dfrac{N}{2}$, then the number $p_{q}p_{m} \leq \sqrt{N}\sqrt{N} = N$ is a composite number in the network.
	Thus, in this case $p$ and $m$ are connected through a path of at most four steps.
	
	We have just proved that the sub-network $R^{'}(N)$ consisting of all prime numbers less than or equal to $\sqrt{N}$ and all composite numbers less than or equal to $N$ forms a connected network. In order to prove the connectivity of our original network $R(N)$, we only need to connect all prime numbers between $\sqrt{N}$ and $\dfrac{N}{2}$ to a node of $R^{'}(N)$. Let $p^{*}$ be a prime number between $\sqrt{N}$ and $\dfrac{N}{2}$, thus the compound $2p^{*} \leq N$ belongs to the network $R^{'}(N)$. Thus
	\begin{equation*}
		\cdots \rightarrow 2p^{*} \rightarrow p^{*}, 
	\end{equation*}
	which completes the proof.
\end{proof}
The following lemma plays a significant role in the proof of the next theorem. 
\begin{lemma}\label{LemPriN3N2}
	There exists at least one prime number in the interval $\left( \dfrac{N}{3}, \dfrac{N}{2}\right] $  for any value of $N\geq 4$ with $N\neq 9$ and $N\neq 21$.  
\end{lemma}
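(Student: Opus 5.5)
The plan is to use Theorem~\ref{TheoEstPrim} to show that $\pi(N/2)-\pi(N/3)>0$ for all large $N$, and then check the remaining small values of $N$ by hand. The number of primes in $\left(\frac{N}{3},\frac{N}{2}\right]$ is exactly $\pi(N/2)-\pi(N/3)$, with $\pi$ extended to real arguments by $\pi(x)=\pi(\lfloor x\rfloor)$. Whenever $N/2\ge 17$, part (i) of Theorem~\ref{TheoEstPrim} gives a lower bound for $\pi(N/2)$, and part (ii) gives an upper bound for $\pi(N/3)$, since $N/3>1$. Hence
\[
\pi\!\left(\frac{N}{2}\right)-\pi\!\left(\frac{N}{3}\right) > \frac{N/2}{\ln(N/2)} - 1.25506\,\frac{N/3}{\ln(N/3)} .
\]
Because the estimates in \cite{RosScho} hold for real arguments, the floors cause no trouble. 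Alternatively, I would apply them to $\lfloor N/2\rfloor$ and $\lfloor N/3\rfloor$ and accept a small loss.

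Next I would find where the right-hand side is positive. This is equivalent to
\[
\frac{\ln(N/3)}{\ln(N/2)} > \frac{2}{3}\cdot 1.25506 \approx 0.83671 .
\]
The left-hand side equals $1-\frac{\ln(3/2)}{\ln(N/2)}$, which increases in $N$ and tends to $1$. So the inequality holds as soon as $\ln(N/2) > \ln(1.5)/0.16329 \approx 2.483$, that is, $N/2 > e^{2.483}\approx 11.98$. Combined with the requirement $N/2\ge 17$, this gives the lemma for every $N\ge 34$.

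It remains to check $4\le N\le 33$ directly, which is the only laborious step. The interval $(N/3,N/2]$ contains:
\begin{itemize}
\item $2$ for $N=4,5$;
\item $3$ for $N=6,7,8$;
\item $5$ for $N=10,\dots,14$;
\item $7$ for $N=14,\dots,20$;
\item $11$ for $N=22,\dots,32$;
\item $13$ for $N=26,\dots,33$.
\end{itemize}
For $N=9$ the interval is $(3,4.5]$, and for $N=21$ it is $(7,10.5]$; neither contains a prime. These are exactly the two exceptions in the statement. The main point needing care is the threshold bookkeeping: part (i) of Theorem~\ref{TheoEstPrim} requires its argument to be at least $17$. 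If the floors are handled strictly through integer arguments, the cutoff may shift slightly, and the finite check would then be extended a few steps further using the same list of primes.
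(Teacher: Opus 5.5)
Your proof is correct and follows the same route as the paper: the Rosser--Schoenfeld bounds give $\pi(N/2)-\pi(N/3)>\frac{N/2}{\ln(N/2)}-1.25506\,\frac{N/3}{\ln(N/3)}$, this is positive for all large $N$, and the remaining small $N$ are checked directly. Rewriting positivity as $1-\ln(3/2)/\ln(N/2)>\frac{2}{3}\cdot 1.25506$ is tidier than the paper's unproved claim that this function is increasing for $x\ge 24$ with $f(116)>1$, and it shrinks the finite check from $4\le N\le 115$ (done by computer in the paper) to $4\le N\le 33$, which your list of primes covers correctly.
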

\begin{proof}
The number of prime numbers between $\dfrac{N}{3}$ and $\dfrac{N}{2}$ is $\pi(\frac{N}{2}) - \pi(\frac{N}{3})$. From Theorem \ref{TheoEstPrim} we get that
\begin{align*}
\dfrac{\frac{N}{2}}{\ln\frac{N}{2}} - 1.25506 \dfrac{\frac{N}{3}}{\ln\frac{N}{3}} < \pi(\frac{N}{2}) - \pi(\frac{N}{3}).
\end{align*}
It can be shown through some tedious computations that the function
\begin{align*}
f(x) =	\dfrac{\frac{x}{2}}{\ln\frac{x}{2}} - 1.25506 \dfrac{\frac{x}{3}}{\ln\frac{x}{3}} 
\end{align*}
is an increasing function for $x \geq 24$. Therefore for any $N \geq 116$ we have,
\begin{align*}
1 < f(116) \leq f(N) =	\dfrac{\frac{N}{2}}{\ln\frac{N}{2}} - 1.25506 \dfrac{\frac{N}{3}}{\ln\frac{N}{3}} < \pi(\frac{N}{2}) - \pi(\frac{N}{3}).
\end{align*}
For values of $N$ between 4 and 115, this can be checked manually with the aid of a computer program. That completes the proof.
\end{proof}

We are now in a position to prove the subsequent result. 
\begin{theorem}\label{TheoDiam5}
	The diameter of $R(N)$ is 5 for any value of $N \in \mathbb{N}$.
\end{theorem}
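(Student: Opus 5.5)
We prove that every pair of nodes is at distance at most $5$ (upper bound) and exhibit a pair at distance exactly $5$ (lower bound). Since the network is bipartite (primes on one side, composites on the other), distances between two primes or between two composites are even, and distances between a prime and a composite are odd. So to show the diameter is at most $5$ it suffices to show that two composites are at distance $\le 4$, two primes are at distance $\le 4$, and a prime and a composite are at distance $\le 5$. The statement ``for any $N$'' has to be read for $N$ large enough that the network is nontrivial; small $N$, and the exceptional values $N=9,21$ of Lemma \ref{LemPriN3N2}, will be checked directly.

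\textbf{Upper bound.} For two composites $q,m$, the proof of Theorem \ref{TheoConectNet} already gives $q\to p_q\to p_qp_m\to p_m\to m$. This works because the smallest prime factor of a composite $\le N$ is at most $\sqrt N$, so $p_qp_m\le N$, and $p_qp_m$ is a composite node. If $p_q=p_m$ the path shortens to length $2$.

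For two primes $p,p'$ in the network (all primes are $\le N/2$):
\begin{itemize}
\item If $pp'\le N$, then $p\to pp'\to p'$ has length $2$.
\item Otherwise use $p\to 2p\to 2\to 2p'\to p'$, of length $4$; this is valid since $2p,2p'\le N$.
\end{itemize}

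For a prime $p$ and a composite $m$ with smallest prime factor $p_m\le\sqrt N$:
\begin{itemize}
\item If $p\mid m$, the distance is $1$.
\item Otherwise use $p\to 2p\to 2\to 2p_m\to p_m\to m$. The node $2p_m\le 2\sqrt N\le N$ is composite, so the path has length $\le 5$.
\item If $p_m=2$, this collapses to $p\to 2p\to 2\to m$, of length $3$.
\end{itemize}
Hence $\operatorname{diam} R(N)\le 5$.

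\textbf{Lower bound.} This is the main step. Choose, by Lemma \ref{LemPriN3N2}, a prime $p\in(N/3,N/2]$. Its only multiples in $[2,N]$ are $p$ and $2p$, since $3p>N$. So $p$ has degree one, with unique neighbour $2p$, whose prime neighbours are $2$ and $p$.

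Next choose a composite $m$ that is not adjacent to $2$ or $p$, and whose prime factors are all at distance $\ge 4$ from $p$. Since $d(p,2)=2$, the distance from $p$ to a prime $r$ is $2$ only if $r=2$, or if $r=p$. So any odd prime $r\ne p$ has $d(p,r)\ge 4$, hence $d(p,r)=4$ by the upper bound.

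Take $m=3^2$ or $m=9$, or more generally $m=r^2$ or $m=3r$ with $r$ odd and $r\ne p$, and $m\le N$. Then every neighbour of $m$ is at distance $4$ from $p$. Therefore $d(p,m)\ge 5$, and combined with the upper bound, $d(p,m)=5$.

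The obstacle is ensuring that such $p$ and $m$ exist. Lemma \ref{LemPriN3N2} gives $p$ for $N\ge 4$, $N\ne 9,21$. We need $9\le N$, and $p\ne 3$ holds automatically once $N\ge 10$. The finitely many remaining cases, namely small $N$ and $N=9,21$, will be handled by direct inspection. For instance, for $N=21$ use $p=7\in(7,10.5]$ excluded; instead pick $p=5$ or $p=7$ and check the distances explicitly. We would also note that for very small $N$ the claim must be qualified, which we would do in the statement's intended ``large enough'' sense.
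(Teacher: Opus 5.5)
Your argument is correct wherever it applies, and it is essentially the paper's. The extremal pair is the same: a prime $\widetilde p\in(N/3,N/2]$, which is a pendant vertex on $2\widetilde p$ (whose only other neighbour is $2$), and an odd composite such as $9$. The paper writes the lower bound as $d(\widetilde p,m)=2+d(2,m)=2+3$; you write it as ``every odd prime is at distance exactly $4$ from $\widetilde p$'', which is the same observation. Your upper bound, via bipartite parity and explicit paths through the node $2$, is more careful than the paper's, which simply asserts that any two nodes are joined by a path of length at most five. A small slip: $d(p,p)=0$, so the only prime at distance $2$ from $\widetilde p$ is $2$.

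What fails is the promised ``direct inspection'' of the exceptional values. The statement is false there, so no choice of $p$ can rescue it. In $R(21)$ the primes are $2,3,5,7$. Every composite $\le 21$ is divisible by $2$ or $3$ (since $25,35,49>21$), and each of $2,3$ is within distance $2$ of every prime (via $6,10,14,15,21$). Hence every prime--composite distance is at most $3$. Same-type distances are even and at most $4$ by your upper bound, so $\mathrm{diam}\,R(21)=4$, attained for instance by $d(5,7)=4$ (as $35>21$). Likewise $R(9)$ has nodes $2,3,4,6,8,9$ and diameter $4$, attained by $9$ and $4$. For $N\le 8$ the diameter is at most $3$. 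So what your proof actually establishes is $\mathrm{diam}\,R(N)=5$ exactly for $N\ge 10$, $N\neq 21$. The paper's proof has the same limitation, hidden behind ``$N$ big enough''. You should state the result with that restriction rather than claim the exceptions can be checked.
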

\begin{proof}
	Our proof starts with the observation that there is a prime number between $\dfrac{N}{3}$ and $\dfrac{N}{2}$, and that any two nodes of $R(N)$ can be linked through a path of at most five steps. Therefore, it only remains to prove that there exists a pair of nodes such that the shortest path between them is exactly five steps.
	
	Indeed, from Lemma \ref{LemPriN3N2} we have that there exists at least one prime number $\widetilde{p}$ between $\dfrac{N}{3}$ and $\dfrac{N}{2}$ for any value of $N$ big enough. Due to the fact that $ \dfrac{N}{3} < \widetilde{p} \leq \dfrac{N}{2}$, then $p\widetilde{p} \leq N$ implies that $p = 2$. Hence $\widetilde{p}$ is only connected to the node $2\widetilde{p}$, and this compound number is only connected to $\widetilde{p}$ and 2. Therefore, we have a unique path of two steps connecting $\widetilde{p}$ and 2,
	\begin{equation*}
		2 \rightarrow 2\widetilde{p} \rightarrow \widetilde{p} 
	\end{equation*}   	
	We are reduced to proving that there exists a number such that the minimal path to the node 2 is exactly three steps. In order to do that, we are going to pick an odd composite number $m$. A composite number is only connected to its prime factors. However, because $m$ is an odd number, $m$ is not connected to node 2. We will denote $p_{m}$ an arbitrary prime factor of $m$.\\
	Again, we only need to show that the minimal path between $p_{m}$ and node 2 has two steps. Because a prime number is connected only to composite numbers, there does not exist a one-step path from $p_{m}$ to the node 2. Thus, the shortest path should contain at least two steps. Indeed, the composite number $2p_{m}$ is connected to both nodes and is less than $N$. Hence, the path
	\begin{equation*}
		p_{m} \rightarrow 2p_{m} \rightarrow 2,
	\end{equation*} 
	is a minimal path between $p_{m}$ and 2. Consequently, the path 
	\begin{equation*}
		m \rightarrow	p_{m} \rightarrow 2p_{m} \rightarrow 2 \rightarrow 2\widetilde{p} \rightarrow \widetilde{p},
	\end{equation*}
	is a minimal path between an odd compound number $m$ and a prime number $\widetilde{p}$ between $\dfrac{N}{3}$ and $\dfrac{N}{2}$, and the proof is completed.
\end{proof}
We should note that this is an example of a network with an arbitrary number of nodes where the diameter is always five. To the best of the authors' knowledge, there is no previous example of a complex network with a constant diameter regardless of the increase in the number of nodes.    

\paragraph{Relevant and Boundary Nodes}

\begin{theorem}(Network Boundary Nodes)
The boundary nodes of the network are the nodes corresponding to numbers of the form $p_{i}^{\alpha_{i}}$ (powers of prime numbers) and the nodes corresponding to prime numbers between $\frac{N}{3}$ and $\frac{N}{2}$.	
\end{theorem}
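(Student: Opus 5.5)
Since boundary nodes are those of degree one, the plan is to compute the degree of every node of $R(N)$ directly from the bipartite structure. Composite nodes are adjacent exactly to their distinct prime factors, so a composite number $n=p_1^{\alpha_1}\cdots p_k^{\alpha_k}\le N$ (unique by the Fundamental Theorem of Arithmetic) has degree $k$. A prime node $p$ is adjacent to every composite $n\le N$ divisible by $p$, namely $2p,3p,\dots,\lfloor N/p\rfloor p$ together with $p^2,\dots$ when they lie in $[2,N]$. Hence $\deg(p)=\lfloor N/p\rfloor-1$, since every multiple $jp$ with $2\le j\le \lfloor N/p\rfloor$ is composite and in the network. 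The proof then splits into two cases, composite nodes and prime nodes.

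\textbf{Composite nodes.} A composite node has degree one if and only if $k=1$, that is, $n=p^{\alpha}$ with $\alpha\ge2$. One checks that such a node lies in the network: its single neighbor $p\le\sqrt{N}\le N/2$ is a node, because primes in $(N/2,N]$ are the only ones excluded. So the composite boundary nodes are exactly the prime powers $p^\alpha\le N$ with $\alpha\ge 2$. I will read the statement's ``$p_i^{\alpha_i}$'' with $\alpha_i\ge2$, because a prime itself ($\alpha=1$) is handled in the next case.

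\textbf{Prime nodes.} Here $\deg(p)=\lfloor N/p\rfloor-1=1$ if and only if $\lfloor N/p\rfloor=2$, that is, $2p\le N<3p$, which is $N/3<p\le N/2$. Every such prime is a node of $R(N)$, since only primes in $(N/2,N]$ were removed. This is exactly the computation already used in the proof of Theorem~\ref{TheoDiam5}, where $\widetilde p$ was joined only to $2\widetilde p$. Primes $p\le N/3$ have $3p\le N$, so they have degree at least $2$. Lemma~\ref{LemPriN3N2} shows that this class is nonempty for $N\ge4$ with $N\neq 9,21$, although nonemptiness is not needed for the characterization.

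Combining the two cases gives the theorem. The only delicate point is bookkeeping the endpoints and the exclusion rule. I need to confirm that no node has degree zero: this holds because primes above $N/2$ are excluded, and it is consistent with Theorem~\ref{TheoConectNet}. I also need to handle the boundary value $p=N/3$ correctly; it is excluded, since then $3p=N$ gives degree $2$. As a by-product, the exact count of boundary nodes is
\[
\sum_{p\le\sqrt N}\left(\left\lfloor \log_p N\right\rfloor-1\right)+\pi\!\left(\tfrac N2\right)-\pi\!\left(\tfrac N3\right),
\]
where the first term counts the prime powers $p^\alpha\le N$ with $\alpha\ge2$.
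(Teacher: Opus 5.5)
Your proposal is correct and follows essentially the same route as the paper: a composite node has degree one exactly when it has a single distinct prime factor, and a prime $p$ has degree one exactly when $2p\le N<3p$. The paper only exhibits the two neighbours $2p$ and $3p$ for $p\le N/3$, whereas you compute $\deg(p)=\lfloor N/p\rfloor-1$ outright. Your explicit count $\sum_{p\le\sqrt N}(\lfloor\log_p N\rfloor-1)+\pi(N/2)-\pi(N/3)$ is a small addition, since the paper's proof never produces a number. You check the exclusion rule only for prime powers; it also holds for composites with $k\ge2$ distinct prime factors, because every prime factor of a composite $n\le N$ is at most $n/2\le N/2$, so no neighbour is lost.
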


\begin{proof}
From the definition of the network $R(N)$, we have that the numbers $n = p_{i}^{\alpha_{i}}$ with $\alpha_{i} > 1$ are only related to one node because they are composite numbers with only one distinct prime factor. On the other hand, a composite number with more than one distinct prime factor has more than one edge. Additionally, a prime number $p$ between $\frac{N}{3}$ and $\frac{N}{2}$ is linked to the composite number $2p$, but is not connected to any other composite number because  $mp > N$ with $m \geq 3$. Finally, any other prime number $p$ in $R(N)$ is related to  $2p$ and $3p$ at least. This completes the proof.
\end{proof}

\begin{theorem}\label{TheoRelevantThisNetwork}
The relevant nodes provided by Algorithm \ref{TeoAlgRelNodeClass} are all prime numbers in the network, i.e prime numbers less than or equal to $\frac{N}{2}$.
\end{theorem}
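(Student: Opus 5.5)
We read the statement as saying that, when Algorithm \ref{TeoAlgRelNodeClass} is run on $R(N)$, the nodes it removes are exactly the primes $p\le \frac{N}{2}$. The stopping rule ``if the new network contains any connected sub-networks'' we interpret as ``some edge remains''. The proof has two halves: the algorithm removes only primes while edges remain, and it cannot stop before every prime is removed.

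\textbf{Step 1: a remaining prime always has maximal degree.} Take any stage at which some edges remain. Since $R(N)$ is bipartite, every edge joins a prime to a composite, so every edge that remains joins a remaining prime $p$ to a composite $n$ with $p\mid n$.

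First consider the original network. A composite $n\le N$ has degree $\omega(n)$, the number of its distinct prime factors. A prime $p\le N/2$ has degree $\lfloor N/p\rfloor -1$, the number of multiples $kp\le N$ with $k\ge 2$. For $p=2$ this is $\lfloor N/2\rfloor -1$, which is far larger than $\max\omega(n)=O(\ln N/\ln\ln N)$.

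Now consider a later stage, after some primes have been removed. The degree of a composite is then $\omega(n)$ minus the number of its prime factors already deleted. Suppose the largest remaining prime degree is at least the degree of every remaining composite. Then the maximum-degree node is a prime, or a tie can be broken in favour of a prime, and the algorithm again removes a prime.

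\textbf{Step 2: the degree comparison (main obstacle).} We must show that some remaining prime $p$ has current degree at least that of every remaining composite. The key observation is a double count. A remaining composite $n$ of current degree $d$ has $d$ remaining prime divisors $p_1<\dots<p_d$. Each $p_i$ divides $n$, and also divides $n/p_j$ times other cofactors. Because all the $p_i$ are remaining, each $p_i$ is adjacent to the $d$ numbers $p_i$, $p_ip_j$ (for $j\neq i$), and so on.

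The cleanest route is this. For $d\ge 2$, the smallest remaining prime $p_1$ of $n$ satisfies $p_1\le N^{1/d}$, so $p_1$ is adjacent to many multiples $p_1 q$, where $q$ ranges over the remaining primes up to $N/p_1$. These multiples are still present, because composites are never deleted. In particular $p_1$ is adjacent to every $p_1p_j$ with $j\ge 2$, and to $n$ itself. That already gives $p_1$ at least $d$ remaining neighbours, so $\deg p_1\ge d=\deg n$.

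The subtle point is ties. If the algorithm breaks ties arbitrarily, it could pick a composite. We would handle this by arguing that deleting such a composite does not change which primes remain. Alternatively, we can note that the inequality $\deg p_1\ge d+1$ is strict, because $p_1$ also divides $p_1^2$ or $2p_1$, provided the count is checked. We expect this tie analysis to be where care is needed, and we would first try to prove the strict inequality $\deg p_1>\deg n$ directly.

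\textbf{Step 3: termination exactly when the primes are exhausted.} Once every prime $\le N/2$ has been removed, the remaining graph is the set of composites with no edges, since $R(N)$ is bipartite. So the process stops. Conversely, while some prime $p\le N/2$ remains, it is adjacent to the composite $2p$. This composite is still present because composites are never removed by Step 2. Hence an edge remains, the process continues, and $p$ is eventually removed.

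Combining the three steps, the set of relevant nodes is exactly $\{p \text{ prime}: p\le N/2\}$.
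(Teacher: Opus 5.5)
Your route differs from the paper's, and with two repairs it proves the statement. The paper proves more than is asked: it shows the primes are removed in \emph{increasing} order. Once all primes below $p$ are gone, $p$ still has degree $\lfloor N/p\rfloor-1$, while every surviving composite has at most $\lfloor\log_p N\rfloor$ surviving prime factors. The paper then uses that $x\mapsto\lfloor x/p\rfloor-1-\lfloor\log_p x\rfloor$ is non-decreasing and vanishes at $x=2p$. You instead bound each composite $n$ locally by its own smallest surviving prime factor $p_1$. This needs neither the logarithmic bound nor the floor-function monotonicity, and it does not depend on the order of removal. Your Step 3 (the edge $p$--$2p$ survives as long as $p$ does) also justifies termination more cleanly than the paper's one-line ``conversely''. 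What you lose is the explicit removal order.

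\textbf{Repair 1: the neighbour count.} Your count fails when $d=2$ and $n=p_1p_2$, because then $n$ coincides with $p_1p_2$ and your list has only one element. Use $p_1^2,p_1p_2,\dots,p_1p_d$ instead. These are $d$ distinct composites adjacent to $p_1$, all in the network since $p_1^2<p_1p_2\le n\le N$.

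\textbf{Repair 2: ties.} Neither of your fallbacks works. The strict inequality $\deg p_1>\deg n$ is false. For every prime $p\ge 3$ in $(N/3,N/2]$, once $2$ has been removed, $p$ and $2p$ form an isolated edge whose endpoints both have degree $1$. Already at the first step for $N=6$ you have $\deg 2=\deg 6=2$. Deleting the composite is not harmless either. It puts a composite on the list of relevant nodes and leaves $p$ isolated, so $p$ is never removed.

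So the statement depends on a tie-breaking rule, and the paper's proof fixes one explicitly: among nodes of equal degree, the smaller integer comes first. Adopt that rule (or ``primes first'') as a hypothesis rather than an optional alternative. Since $p_1<n$, the first node of the sorted list can then never be a composite, and your Steps 1--3 yield exactly the primes $p\le N/2$.
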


\begin{proof}
Due to the fact that nodes correspond to natural numbers, we are going to assume that when two nodes have the same degree, the one corresponding to a lower natural number is first in the list of degrees.

For any prime number $p$, there are exactly $\left[\dfrac{N}{p} \right] - 1$ composite numbers less than or equal to $N$ that are divisible by $p$. That means the lowest prime in the list has the highest number of prime factors. Additionally, for composite numbers, the degree is less than $\left[ \mathrm{log}_{2}N\right]$. Therefore, when $\left[\dfrac{N}{2} \right] - 1 \geq \left[ \mathrm{log}_{2}N\right]$, we have that 2 is the first node in the list. The function $f(x) = \left[\dfrac{x}{2} \right] - 1 - \left[ \mathrm{log}_{2}x\right]$ is a non decreasing function of $x$. Therefore, for $N\geq 6$, this method proves to us that 2 is the node of highest degree in the network. When we erase 2 and repeat the process, we have that the remaining lowest prime is 3 and the degree of the remaining composite number nodes in the new network is less than $\left[ \mathrm{log}_{3}N\right]$. Therefore, when $\left[\dfrac{N}{3} \right] - 1 \geq \left[ \mathrm{log}_{3}N\right]$,  we have that 3 is the first node in the new list. Therefore, for $N\geq 6$, this method proves to us that 3 is the node of highest degree in the network.\\
	We will assume now that for an $N$ big enough, all the prime numbers lower than $p$ have been erased. Then the degree of $p$ is still $\left[\dfrac{N}{p} \right] - 1$ and the degree of the remaining composite number nodes in the new network is less than $\left[ \mathrm{log}_{p}N\right]$. Therefore when $\left[\dfrac{N}{p} \right] - 1 \geq \left[ \mathrm{log}_{p}N\right]$  we have than $p$ is the first node in the new list. We should take into account that a composite number $m$ remains connected to the new network after removing the prime numbers lower than $p$ and disconnected nodes, which means that $m$ has $p$ or a prime factor greater than $p$ in its prime factor decomposition and therefore $m > p$. This means that, if $p$ and $m$ have the same degree, then from our first assumption the node corresponding to $p$ is first in the list.
	
Now we are going to analyze the equivalent inequality,
	\begin{equation*}
		\left[\dfrac{N}{p} \right] - 1 - \left[ \mathrm{log}_{p}N\right]\geq 0.
	\end{equation*}
	As before, the function $f(x) = \left[\dfrac{x}{p} \right] - 1 - \left[ \mathrm{log}_{p}x\right]$ is a non decreasing function of $x$. Hence, when $N \geq 2p$ (or equivalently $p \leq \dfrac{N}{2}$) we obtain
	\begin{equation*}
		\left[\dfrac{N}{p} \right] - 1 - \left[ \mathrm{log}_{p}N\right]\geq f(2p) = 0,
	\end{equation*}    
here we are assuming $p>2$.

Thus, the lowest prime number remaining connected in the new network is always the first node in the list of degrees. Conversely, when all prime numbers have been erased, then there is no node still connected to the network. Then, the list of relevant nodes is the list of prime numbers less than or equal to $\dfrac{N}{2}$. This completes the proof.  	 
\end{proof}

\paragraph{Minimal Diameter}
The result from Theorem \ref{TheoDiam5} leads to the following question: Does there exist a minimal constant diameter regardless of the increasing number of nodes?\

In this subsection, we are going to examine this question. Indeed, from the proof of Theorem \ref{TheoConectNet} we know that the diameter of $R^{'}(N)$ is at most four. Moreover, due to the fact that compound numbers are connected only to prime numbers and vice versa, case 2 of the above-referred proof actually shows that the minimal path between two compound numbers that are not relatively prime is exactly four. Hence we have proved the following theorem.
\begin{theorem}
The diameter of the network $R^{'}(N)$ is 4.
\end{theorem}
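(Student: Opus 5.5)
The proof of Theorem~\ref{TheoDiam5} has two parts, and the plan is to redo both for $R^{'}(N)$, the sub-network whose nodes are the primes $p\le\sqrt{N}$ and the composite numbers up to $N$. First we show every distance is at most $4$. Then we exhibit, for $N$ large enough, a pair of nodes at distance exactly $4$. Throughout, we use that $R^{'}(N)$ is bipartite: primes are adjacent only to composites and composites only to primes. So distances between two composites, or between two primes, are even, and a composite–prime distance is odd.

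\textbf{Upper bound.} Take two composites $q,m$. Their smallest prime factors $p_q,p_m$ satisfy $p_q,p_m\le\sqrt{N}$, because a composite $n\le N$ has a prime factor at most $\sqrt{n}$. Hence both lie in $R^{'}(N)$, and the two cases of the proof of Theorem~\ref{TheoConectNet} give $d(q,m)\le 4$. If $p_q=p_m$, then $q,m$ share a factor and $d=2$. Otherwise $p_qp_m\le N$ is a composite adjacent to both primes, giving a path of length $4$.

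Two primes $p,p'\le\sqrt N$ are joined by $p\to pp'\to p'$, so their distance is $2$. For a prime $p$ and a composite $m$, use $p\to pp_m\to p_m\to m$, or a shorter path if $p\mid m$. Note that $pp_m\le \sqrt N\sqrt N=N$, so $pp_m$ is in the network. This gives distance at most $3$.

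\textbf{Lower bound.} We need two composites with distance exactly $4$. By bipartiteness, $d(q,m)\in\{2,4\}$, and $d=2$ holds exactly when $q,m$ have a common prime factor lying in $R^{'}(N)$. The prime factors of a composite $n\le N$ need not all be $\le\sqrt N$; for example $2p^{*}$ has the large factor $p^{*}$. So we should simply choose two coprime composites, say $q=4$ and $m=9$, which are present as soon as $N\ge 9$. They share no prime, so no common neighbor exists and $d(4,9)=4$, realized by $4\to 2\to 6\to 3\to 9$.

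\textbf{Main obstacle.} The substance lies in the upper bound. Its key point is that the lowest prime factor of any composite is at most $\sqrt N$, so it is a node of $R^{'}(N)$, and the connecting composite $p_qp_m$ stays $\le N$. Note also that the paper's phrase ``not relatively prime'' in the preceding discussion should read ``relatively prime''. We restrict to $N\ge 9$ to guarantee that the witnesses $4,9,6$ exist. Small $N$ can be checked directly or excluded as the ``large enough'' hypothesis from the abstract.
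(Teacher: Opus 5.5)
Your proof is correct and follows the paper's route. The upper bound uses the path $q\to p_q\to p_qp_m\to p_m\to m$ through smallest prime factors, as in the proof of Theorem~\ref{TheoConectNet}, and the lower bound applies bipartiteness to two coprime composites; your version is slightly more careful than the paper's, since you split on whether the smallest prime factors coincide (a shared prime above $\sqrt{N}$ is not a node of $R^{'}(N)$, so the paper's Case~1 does not transfer directly) and you make explicit the threshold $N\ge 9$ below which the diameter is smaller than $4$.
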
     
Note that we have proved more, namely that the networks $R(N)$ and $R^{'}(N)$ are of small-world type; see \cite{ChanDas, Cor} and the references given there. 
\subsection{Additional Topological Facts}

\paragraph{New Approach to Determine Relevant Nodes}
\noindent

Theorem \ref{TheoRelevantThisNetwork} tells us that the list of relevant nodes will be exactly the list of primes less than or equal to $\frac{N}{2}$. This tells us that, in effect, this criterion is not entirely correct for this specific network, since among these primes there are some with very few connections; for example, those between $\frac{N}{3}$ and $\frac{N}{2}$ have only one connection, and therefore should not be considered relevant nodes. Intuitively, we can infer that the relevant nodes in our network should be formed by those prime numbers between 2 and $\sqrt{N}$, since multiplying these nodes together gives us composite numbers that fall within the range less than $N$. Therefore, we will have many more links associated with these nodes, compared to the primes between $\sqrt{N}$ and $\frac{N}{2}$, which can only be multiplied by primes less than $\sqrt{N}$ to produce composite numbers within the range of composites less than $N$. As a result, these primes between $\sqrt{N}$ and $\frac{N}{2}$ will have far fewer connections than the primes between 2 and $\sqrt{N}$. To correct this problem, we developed two algorithms that aim to improve the bound for finding relevant nodes for networks in general.\\
We call the first of these methods Weighted Degree Difference, and the idea is to divide, across the whole list, the degree of each node by the average node degree of the entire network. This new weighted degree value tells us how many times greater a node's degree is than the network average — in other words, how many times better than the network average that node is.\\
What we would then do is look at the difference between consecutive values in the list. If the difference between these weighted degrees is greater than one, that tells us the node is much more important than the one that follows it, and it would therefore be added to the list of main, or relevant, nodes of the network. On the other hand, if the difference is not greater than one, we would continue examining consecutive nodes in this way, up to a certain number of times, until this condition does or does not occur — producing, at the end, the list of relevant nodes for our network. The algorithm is described as follows.
\begin{algorithm}\label{AlgoWeighDeg}Relevant Nodes Algorithm (Weighted Degree Difference)\\
Let $g(j)$ be the degree of node $j$, $AD$ the average degree of the network, and $g_{p}(j) = \dfrac{g(j)}{AD}$.
	\begin{enumerate}
		\item[(i)] The nodes are sorted from highest to lowest based on their degree.
		\item[(ii)]Starting from the beginning of the list of degrees, the $j$ and $j+1$ nodes are selected; if $g_{p}(j) -g_{p}(j+1) > 1$, then the $j$ node is added to the list of relevant nodes.  
		\item[(iii)] If $g_{p}(j) - g_{p} (j+1) \leq 1$, we examine $g_{p}(j+1) - g_{p}(j+2)$. In general, if any of the differences $g_{p}(j) - g_{p}(j+1) > 1$ for some $j$ in an interval of length $\max(\ln N, L)$, where $ L$ is the length of the updated list of relevant nodes, then node $j$ and the nodes to its left are added to the list of relevant nodes and deleted from the list of degrees.
		\item[(iv)] Steps (ii) and (iii) are repeated, starting with the next node in the list; the differences $g_{p}(j) - g_{p}(j+1) > 1$ are checked for some $j$ in an interval of length $\max(\ln N, L)$, where $L$ is the length of the updated list of relevant nodes. 
		\item[(v)] Stop the process when $g_{p}(j) - g_{p}(j+1) \leq 1$ for all $j$ in an interval of length $\max(\ln N, L)$ or the end of the list of degrees has been reached.
	\end{enumerate}		
\end{algorithm}
An important advantage this algorithm gives us is in terms of computational complexity, since in the worst case we would only need to traverse the list once. This would give us a complexity of $\mathcal{O}(N)$, which does not increase the complexity of the algorithm needed to build the list itself, and therefore would substantially improve the computational cost of the algorithm stated in Algorithm \ref{TeoAlgRelNodeClass}. The computational complexity of Algorithm \ref{AlgoWeighDeg} is  $\mathcal{O}(N \log N)$, which is the time complexity to sort nodes by their degree.

At first glance, the usefulness of the proposed algorithm for identifying relevant nodes is evident from two fundamental aspects. The first is the use of the weighted degree of a node, which measures how many times its degree exceeds the average degree of the nodes in the network. The second is that this metric is evaluated within a neighborhood of the node in the ranked list of highest-degree nodes, over an interval of prescribed length.

Furthermore, in order to assess the effectiveness of the proposed algorithm, we implemented it and applied it to several networks for various values of $N$. Specifically, for the first four cases, the values of $N$ were taken in increments of 250, and thereafter in increments of 1000, up to $N = 64000$.

First, we observed that for values of $N$ less than 4000, the list of relevant nodes produced by this algorithm usually coincides with the list of prime numbers less than or equal to $\sqrt{N}$, that is, the ideal list in our case. For larger values of $N$, however, there were always relevant nodes greater than $\sqrt{N}$. The length of the list of these extra nodes is not an increasing function of $N$; it exhibits a zigzag behavior over the analyzed values. To continue our analysis, we computed several normalized quantities for the values of $N$ mentioned above.

First, we calculated the ratio between the length of the list of relevant nodes and the length of the list of prime numbers less than or equal to $N/2$, which is precisely the list of relevant nodes produced by Algorithm \ref{TeoAlgRelNodeClass}. Among the studied values of $N$, the maximum value of this ratio was 0.2, the minimum was 0.0192, and the average was 0.0401. This shows that the list produced by Algorithm \ref{AlgoWeighDeg} is significantly smaller than the one produced by Algorithm \ref{TeoAlgRelNodeClass}, even though it does not coincide with the list of prime numbers less than or equal to $\sqrt{N}$.

Second, we analyzed the ratio between the length of the list of relevant nodes greater than $\sqrt{N}$ and the length of the list of prime numbers less than or equal to $\sqrt{N}$. The maximum value of this ratio among the studied values of $N$ was 0.5, the minimum was 0, and the average was 0.2632. This indicates that the list produced by the algorithm is not much larger than the ideal one.

Finally, we examined the weighted degree of the last relevant node produced by Algorithm \ref{AlgoWeighDeg}. Among the studied values of $N$, its maximum value was 37.3791, its minimum was 5.0222, and its average was 23.6247. This shows that every node in the list has a degree many times greater than the average degree of the network. In contrast, the last node of the list produced by Algorithm \ref{TeoAlgRelNodeClass} has only one link in almost all cases, the exceptions being $N=9$ and $N=21$, as shown in Theorem \ref{TheoRelevantThisNetwork} and Lemma \ref{LemPriN3N2}.

The second method constitutes a refinement of the method proposed in Algorithm \ref{TeoAlgRelNodeClass}. We call it the No Disconnection Algorithm. Basically, we take the list of relevant nodes produced by Theorem 1, and what we're going to do is clean out from it those that are not actually relevant according to the criterion. To do this, we start from the back of the list and move forward, removing the supposedly relevant node in question. If removing this node does not disconnect any other node, then we remove it from the list of relevant nodes. We then repeat the process with the next node.

\begin{algorithm}(Relevant Nodes Algorithm (No Disconnection))
\begin{enumerate}
\item[(i)] List the relevant nodes obtained through the disconnection algorithm.
\item[(ii)] The list is sorted from smallest to largest based on the node's order in the original network.
\item[(iii)] The node with the lowest order in the list is removed from the network; if no other node disconnects from the network, this node is removed from the list; otherwise, it is retained.
\item[(iv)] Repeat steps (ii)-(iii) with the new list using the original network if the node being analyzed is retained (moving on to the next node) or a new network without the node if the node has been removed from the list of relevant nodes. 
\item[(v)] Stop the process when all nodes in the original list have been processed.
\end{enumerate}	
\end{algorithm}

Applying this algorithm to our network, we in fact obtain that the refined list of relevant nodes will be exactly the prime numbers less than or equal to $\sqrt{N}$, since all composite numbers connected to primes greater than $\sqrt{N}$ must also be connected to primes less than $\sqrt{N}$.\\
In this sense, if we remove any prime number between $\sqrt{N}$ and $\frac{N}{2}$, no node will become disconnected, because all the composite numbers will also be connected to primes less than $\sqrt{N}$. And this holds even if we remove all at once every prime number between $\sqrt{N}$ and $\frac{N}{2}$.\\
The disadvantage of this algorithm is that the computational cost does not improve, since to begin with, we need to run the entire initial algorithm before we can then refine the list of relevant nodes.

\paragraph{Information Dimension}
The information dimension describes the distribution of information in a system. It is based on Shannon's entropy, which quantifies the uncertainty, diversity, and structure of the information contained in the network. Shannon's entropy is defined as \cite{Shan1948} 

\begin{equation}
\label{ShannonEntropy}
I = \lim_{t \to -1} \frac{d}{dt} \sum_{i=1}^{N} p_i^{-t} = -\sum_{i=1}^{N} p_i \ln(p_i),
\end{equation}
where $p_i$ is the probability of the system's microstates. Where  $\sum_{i=1}^{N} {p_i}=1$.

A generalized measure of Shannon entropy is the Tsallis entropy (also called $q$-entropy) \cite{Tsallis1988, Tsallis2010, Tasallis2019cb} in the field of statistical physics, which was proposed using the $q-$logarithm that is defined by
\begin{equation}\label{shannonentropy}
ln_{q}(p_i)=\frac{p_i^{1-q}-1}{1-q},
\end{equation}
($p_i>0$, $q \in {\mathbb R}, q\neq 1$, $ln_{1}(p_i)=ln (p_i)$).

Tsallis entropy resembles the Shannon entropy and is defined 
\begin{equation}\label{tsallisentropy}
T_{q} = \lim_{t \rightarrow-1} D^{t}_{q}\sum_{i=1}^{N}{p_{i}^{-t}}=-\sum^{N}_{i=1}p_i ln_q {p_i}.
\end{equation}
where $D^{t}_{q}$ of a function $f$ given by $D^{t}_{q}f(t)=\displaystyle\frac{f(qt)-f(t)}{(q-1)t}$, $t\neq 0$, stands for the Jackson \cite{Jackson} $q-$derivative, see \cite{Abe, Johal, LS} for in depth details. It is notably seen that Tsallis entropy is a one-parameter generalization of Shannon entropy. This means that Tsallis entropy converges to it as $q \rightarrow 1$. The entropic index $q$ is correlated with the complex internal interactions of the system, and usually, it is estimated empirically.

These two entropies can be computed on complex networks by first obtaining the box-covering of the networks \cite{Song2007} using boxes of size $\varepsilon=[1,\Delta+1]$, where $\Delta$ is the diameter of the networks. The diameter means the largest shortest path from any node to another one. Second, we compute each $p_i=\frac{n}{N}$ for $i=1,2,..., N_b(\varepsilon)$, where $n$ is the number of nodes in a given box obtained by the box-covering algorithm \cite{Song2007}, $N_b(\varepsilon)$ is the number of boxes of a given diameter $\varepsilon$, and $N$ is the total number of nodes in the network.

The entropic index $q$ of Tsallis entropy is estimated by \cite{Ramirez2023,RAMIREZARELLANO2020}

\begin{equation}
\label{qepsilon}
q_\varepsilon = \frac{(\Delta - 1) \cdot N_b(\varepsilon)}{\sum_{\varepsilon = 1}^{\Delta+1} N_b(\varepsilon)},
\end{equation} 
where $\Delta$ and $N_b(\varepsilon)$ have been previously described.

Let us consider a generic entropy $I(\varepsilon)$ in the case of a discrete uniform distribution; a definition of the information dimension  is introduced in \cite{Dai} as follows
\begin{equation}
\label{informationdimension}
d_{I}=-\lim_{\varepsilon \to 0}\frac{I(\varepsilon)}{\ln \varepsilon} 
\end{equation}
where $\varepsilon$ is the size of the boxes to cover the network. Since the size of a box in complex networks is $\varepsilon \geq 1$, here and below the expression $\lim {\varepsilon \to 0}$ lacks sense, but it serves to keep up the standard notation.  

From Eq. \ref{informationdimension} we can assert that
\begin{equation}\label{idf}
I(\varepsilon) \sim -d_{I} \ln \varepsilon + \beta,
\end{equation} 
for some constant $\beta$, where $\varepsilon$ the size of the boxes to cover the network. 

Now consider that $I(\varepsilon)$ is the Shannon entropy (Eq. \ref{shannonentropy}); it implies computing the entropy for $\varepsilon=[1,\Delta+1]$, then the information dimension is estimated by non-linear regression using Eq. (\ref{idf}). Similarly, the Tsallis information dimension can be computed.

An example of the computation of Shannon and Tsallis entropies of a network is described below. Figure \ref{fig:Covering} shows the covering of a network of prime factors and composite numbers with $n=10$. The not-connected nodes and self-loops were deleted. The covering of the network with boxes of size $\varepsilon=4$ is shown in Figure \ref{fig:Covering} a), for further details on the box-covering algorithm, see \cite{Song2007}. The color of each node corresponds to the box it belongs to. Hence, $B_1=\{2,3,4,6,8\}$ , $B_2=\{5\}$, $B_3=\{9\}$, $B_4=\{10\}$ and $N_b(\varepsilon=4)=4$. Similarly, for $\varepsilon=5$, the covering is introduced in Figure \ref{fig:Covering} b). The boxes obtained are: $B_1=\{2,3,4,5,6,8,10\}$ , $B_2=\{9\}$ so $N_b(\varepsilon=5)=2$. Then the probabilities for $\varepsilon=4$ can be easily computed from the boxes above such $p_1=\frac{5}{8}$, $p_2=\frac{1}{8}$, $p_3=\frac{1}{8}$ and $p_4=\frac{1}{8}$. The Shannon entropy (Eq. \ref{shannonentropy}) for $\varepsilon=4$; $I(\varepsilon=4)=-[\frac{5}{8} \ln(\frac{5}{8}) +\frac{1}{8} \ln(\frac{1}{8})+\frac{1}{8} \ln(\frac{1}{8})+\frac{1}{8} \ln(\frac{1}{8})]=1.073$. The remaining Shannon entropy values are shown in Table \ref{tab:information}.

\begin{figure}[H] 
	\centering
		\includegraphics[scale=0.35]{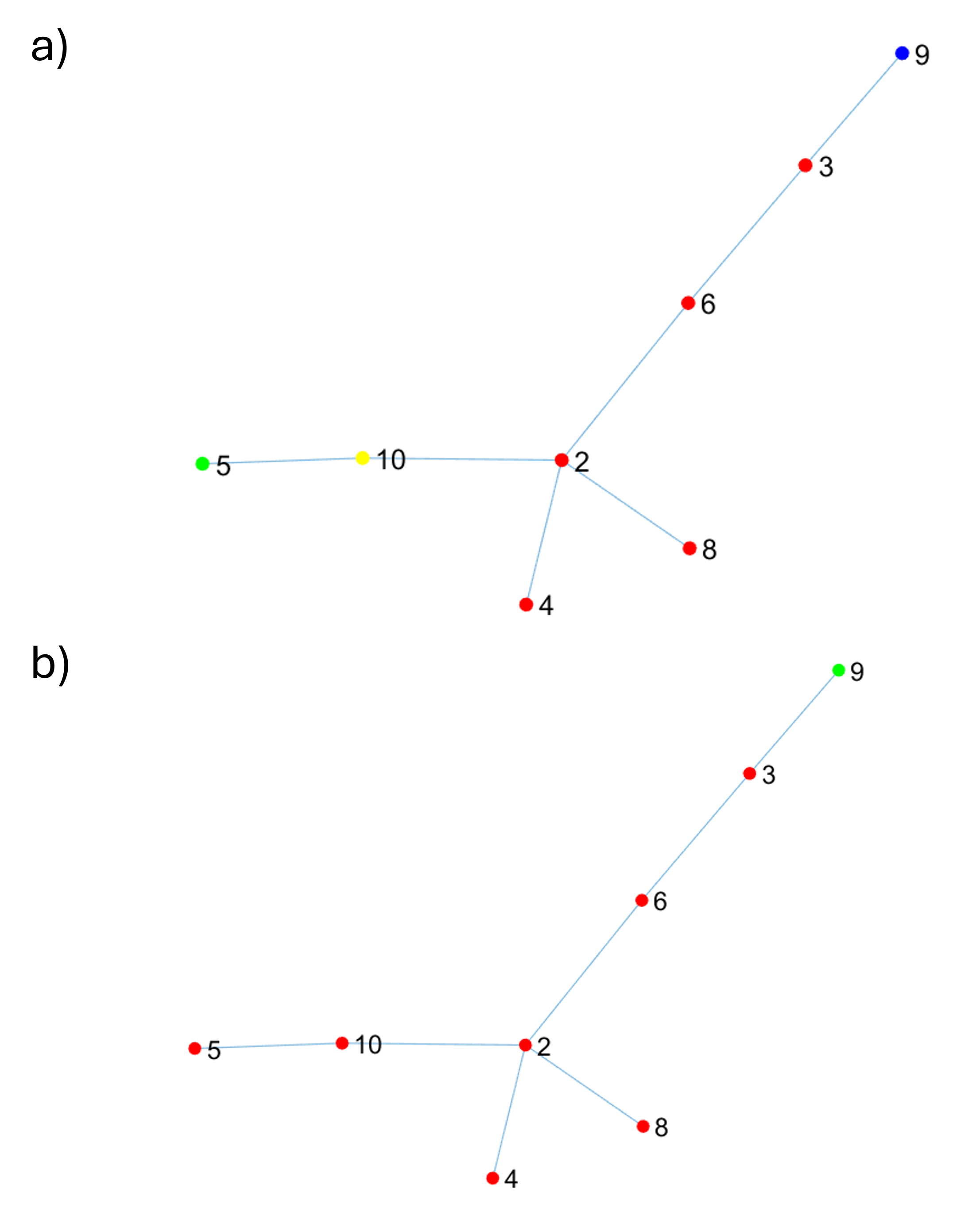}
	\caption{A network covering built with $ n=10$, excluding unconnected nodes and self-loops. The covering with boxes of size a)$\varepsilon=4$ and b)$\varepsilon=5$. }
	\label{fig:Covering}
\end{figure}

On the other hand, the entropic index using Eq. (\ref{qepsilon}) is shown in Table \ref{tab:information} as well as the Tsallis entropy of Eq.(\ref{tsallisentropy}).
\begin{table}[H]
\centering
\begin{tabular}{c c c c c c}   
\toprule
$\varepsilon$ & $N_b(\varepsilon)$ & $I(\varepsilon)$& $q_\varepsilon$ & $T_{q_\varepsilon}$ \\
\midrule
1 & 8 & 2.079 &   1.103 & 1.871\\
2 & 8 & 2.079 &   1.103 & 1.871 \\
3 & 6 & 1.733 &   0.828 & 2.034\\
4 & 4 & 1.074 &   0.552 & 1.615\\
5 & 2 & 0.377 &   0.276 & 0.728\\
6 & 1 & 0 &   0.138 &0 \\
 \bottomrule
\end{tabular}
 \caption{The Shannon $I(\varepsilon)$, and Tsallis ($T_{q_\varepsilon}$) entropy  of the network of Figure \ref{fig:Covering}.}
    \label{tab:information}
\end{table} 
The information dimension ($d_I$) and Tsallis information dimension ($d_T$) are computed using Eq.(\ref{idf}) by replacing $I(\varepsilon)$ with the respective entropy. A non-linear regression is then employed to approximate $d_I$. Figure \ref{fig:Figure2} a) show the Shannon entropy of networks built using $N=[250,500,700]$ and $N=[1000-7000]$ in steps of 1000. Figure \ref{fig:Figure2} b) shows the information dimension of networks constructed using $N=[1000-27000]$ in steps of 1000.

\begin{figure}[htb]
	\centering
		\includegraphics[width=.8\textwidth]{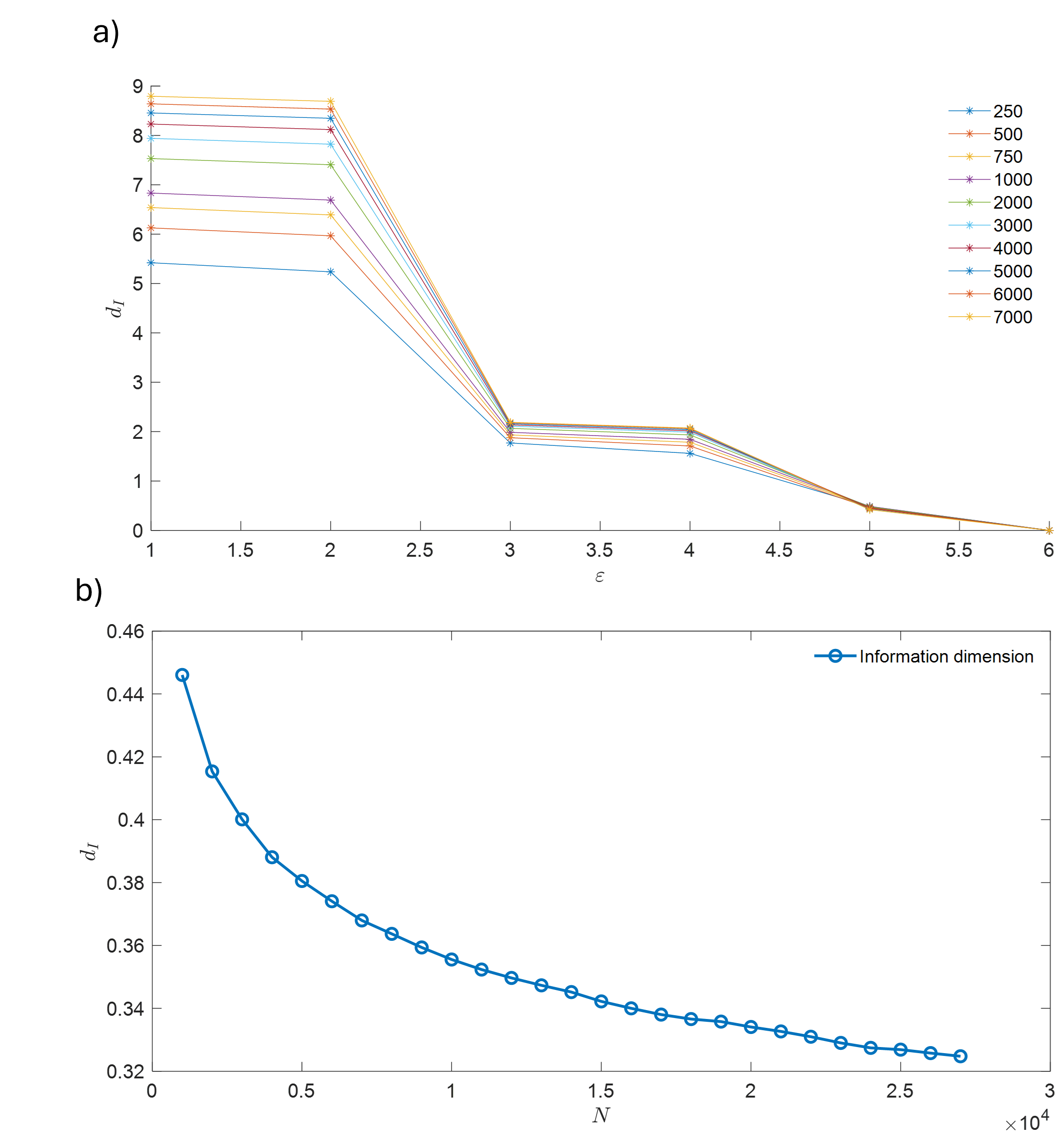}
	\caption{a) The Shannon entropy of several networks built with different $N$ and b) the information dimension varying with $N$}
	\label{fig:Figure2}
\end{figure}

Similarly, the Tsallis entropy ($d_T$) is shown in Figure \ref{fig:Figure3} a) of the networks built using $N=[250,500,700]$ and $N=[1000-7000]$ in steps of 1000  is shown. Also, the Tsallis information dimension ($d_T$) of the networks constructed with $N=[1000-7000]$ in steps of 1000 is shown in Figure \ref{fig:Figure3} b). 

\begin{figure}[htb]
	\centering
		\includegraphics[width=.8\textwidth]{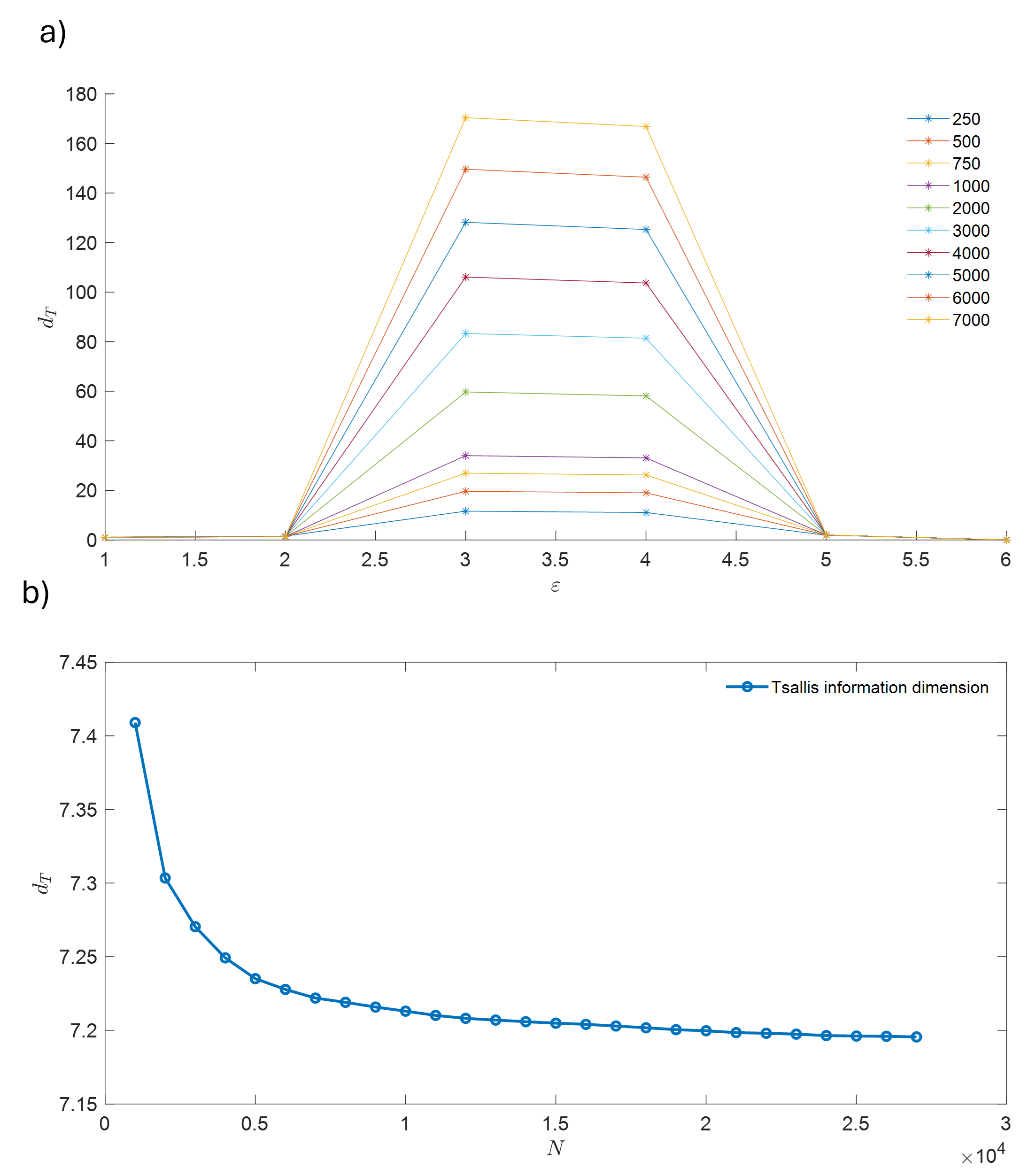}
	\caption{a) The Tsallis entropy of several networks built with different $N$ and b) the Tsallis information dimension varying $N$}
	\label{fig:Figure3}
\end{figure}

The value of the entropies increases as $N$ increases; see Figures \ref{fig:Figure2} a) and \ref{fig:Figure3} a). Thus, the information dimension ($d_I$) and Tsallis information dimension ($d_T$) decrease, as can be shown in Figures \ref{fig:Figure2} b) and \ref{fig:Figure3} b). In contrast, the pattern in Shannon entropy differs from that of Tsallis entropy, which reaches maximum values in $\varepsilon=[3,4]$; see Figures \ref{fig:Figure2} a) and \ref{fig:Figure3} a).

5We wish to acknowledge the support of the author community in using
5REV\TeX{}, offering suggestions and encouragement, testing new versions,

\section{Concluding remarks}
Prime numbers have been the subject of intense human interest from the earliest days of mathematics development, which is possibly a consequence of being unpredictably and irregularly distributed among the other integers. As a consequence of this continuing interest, a large number of concepts and methods, especially in the area of Number Theory, have been developed and systematically applied.

The present work characterizes topological features of a bipartite complex network of prime factors and composite numbers. Interesting results have been obtained and discussed, in particular those listed as follows:
\begin{enumerate}
\item[(i)] The diameter of the network is exactly five with disregard of the amount of nodes. 
\item[(ii)] Certain nontrivial sub-networks have minimal diameter equal to four, which is independent of the number of nodes. The last shows that this network possesses the small-world property. 
\item[(iii)] It characterizes exactly the number of boundary nodes in the network as well as presents a couple of algorithms to find out the most important, relevant, influential, or central nodes of the network.
\item[(iv)] Computing the Shannon and Tsallis information dimensions shows that each characterizes network topology differently, and both decrease as the number of nodes increases. Furthermore, both entropies indicate that box coverings, with diameters $\varepsilon=[3,4]$, group the nodes into approximately the same number of boxes independently of the number of nodes used to build the network. 
\end{enumerate}

\section*{Declarations}

\paragraph{Acknowledgments:}
This article was written during a postdoctoral scientific stay of C. D. Tamayo Castro at Instituto Polit\'ecnico Nacional, kind hospitality is gratefully acknowledged. The author wishes to thank the Secretar\'ia de Ciencia Humanidades Tecnolog\'ia e Innovaci\'on (SECIHTI) for the financial support (Grant Number 957110).
\paragraph{Conflicts of interest:}
The authors declare that they have no competing interests regarding the publication of this paper.
\paragraph{Author Contributions.} 
All authors contributed to the study conception and provided critical feedback and helped shape the research, analysis, and manuscript.


\end{document}